\newtheorem{theorem}{Theorem}[section]
\newtheorem{corollary}[theorem]{Corollary}
\newtheorem{definition}[theorem]{Definition}
\newtheorem{example}[theorem]{Example}
\newtheorem{notation}[theorem]{Notation}
\newtheorem{proposition}[theorem]{Proposition}
\newtheorem{question}[theorem]{Question}
\newcommand{\Z}{\mathbb Z}
\newcommand{\N}{\mathbb N}
\DeclareMathOperator{\buch}{\mathcal{B}}
\DeclareMathOperator{\buchset}{Buch}
\newcommand{\vs}[1]{\langle #1 \rangle}
\begin{document}

\title{The Buchweitz set of a numerical semigroup}

\author{S. Eliahou, J.I. Garc\'ia-Garc\'ia, \\ D. Mar\'in-Arag\'on, A. Vigneron-Tenorio}
\date{}

\maketitle

\begin{abstract} Let $A \subset \Z$ be a finite subset. We denote by $\buch(A)$ the set of all integers $n \ge 2$ such that $|nA| > (2n-1)(|A|-1)$, where $nA=A+\cdots+A$ denotes the $n$-fold sumset of $A$. The motivation to consider $\buch(A)$ stems from Buchweitz's discovery in 1980 that if a numerical semigroup $S \subseteq \N$ is a Weierstrass semigroup, then $\buch(\N \setminus S) = \emptyset$. By constructing instances where this condition fails, Buchweitz disproved a longstanding conjecture by Hurwitz (1893). In this paper, we prove that for any numerical semigroup $S \subset \N$ of genus $g \ge 2$, the set $\buch(\N \setminus S) $ is finite, of unbounded cardinality as $S$ varies.
\end{abstract}

\begin{quote} \textit{Keywords and phrases.} Weierstrass numerical semigroup; gapset; additive combinatorics; sumset growth; Freiman's $3k-3$ theorem.

{\small \emph{MSC-class:} {14H55, 11P70, 20M14}}
\end{quote}

\begin{center}\emph{
    To the memory of our friend and colleague\\
    Fernando Torres (1961-2020)}
\end{center}

\section{Introduction}\label{section intro}
Denote $\N=\{0,1,2,3,\dots\}$ and $\N_+=\N\setminus \{0\}=\{1,2,3,\dots\}$. For $a,b \in \Z$, let $[a,b[=\{z \in \Z \mid a \le z < b\}$ and $[a,\infty[=\{z \in \Z \mid a \le z\}$ denote the integer intervals they span. A \emph{numerical semigroup} is a subset $S \subseteq \N$ containing $0$, stable under addition and with finite complement in $\N$. Equivalently, it is a subset of $\N$ of the form $S = \vs{a_1,\dots,a_n}=\N a_1 + \dots + \N a_n$ where $\gcd(a_1,\dots,a_n)=1$. The set $\{a_1,\dots,a_n\}$ is then called a \emph{system of generators} of $S$, and the smallest such $n$ is called the \emph{embedding dimension} of $S$.

For a numerical semigroup $S$, its corresponding \emph{gapset} is the complement $G=\N \setminus S$, its \emph{genus} is $g=|G|$, its \emph{multiplicity} is $m = \min S^*$ where $S^*= S \setminus \{0\}$, its \emph{Frobenius number} is $f = \max(\Z\setminus S)$ and its \emph{conductor} is $c=f+1$. Thus $[c,\infty[ \, \subseteq S$ and $c$ is minimal for this property. Finally, the \emph{depth} of $S$ is $q=\lceil c/m \rceil$.

Given a finite subset  $A \subset \N$, we denote by $nA=A+\dots+A$ the \emph{$n$-fold sumset of $A$.} See Section~\ref{sec sumset} for more details.

\begin{definition} Let $A \subset \Z$ be a finite subset. We associate to $A$ the function $\beta=\beta_A \colon \N_+ \to \Z$ defined for all $n \ge 1$ by
$$
\beta_A(n) = |nA|-(2n-1)(|A|-1).
$$
\end{definition}

\begin{notation}
We denote by $\buch(A)$ the \emph{positive support} in $2+\N$ of the function $\beta_A$, i.e.
$$
\buch(A) = \{n \ge 2 \mid \beta_A(n) \ge 1\}.
$$
\end{notation}

For instance, $2 \in \buch(A)$ if and only if $|2A| \ge 3|A|-2$. Interestingly, the failure of this condition, namely the inequality $|2A| \le 3|A|-3$, is the key hypothesis of the famous Freiman's $3k-3$ Theorem in additive combinatorics~\cite{F}.

\smallskip

\begin{example}
If $|A|=0$ or $1$, then $\buch(A)$ is infinite. Indeed, if $A=\emptyset$, then $|nA|=0$ and so $\beta_\emptyset(n)=2n-1$ for all $n \ge 1$. Thus $\buch(\emptyset)=2+\N$ in that case. Similarly, if $|A|=1$, then $\beta_A(n)=1$ for all $n \ge 1$. So here again $\buch(A)=2+\N$.
\end{example}

In sharp contrast, Theorem~\ref{finite for NS} below states that \emph{if $S \subset \N$ is a numerical semigroup of genus $g \ge 2$, then $\buch(\N \setminus S)$ is finite}.
\begin{example} Let $S=\vs{3,7}$. Then $\N \setminus S=\{1,2,4,5,8,11\}$ and $\beta_{\N \setminus S}(n)=0$ for all $n \ge 2$ as easily seen. In particular, $\buch(\N \setminus S)=\emptyset$.
\end{example}
More generally, it was shown in \cite{K,O} that $\beta_{\N \setminus S}(n)=0$ for all \emph{symmetric} numerical semigroups $S$ of multiplicity $m \ge 3$ and all $n \ge 2$. We shall not use this result below, but instead give a short self-contained proof of an immediate consequence, namely that $\buch(\N \setminus S)$ is empty in that case.

\smallskip
In fact, $\buch(\N \setminus S)$ is empty in most cases. Indeed, Buchweitz discovered in 1980 that the condition $\buch(\N \setminus S) = \emptyset$ is necessary for $S$ to be a Weierstrass semigroup. By constructing instances where this condition fails,  Buchweitz~\cite{B} was able to negate the longstanding conjecture by Hurwitz~\cite{H} according to which all numerical semigroups of genus $g \ge 2$ are Weierstrass semigroups. His first counterexample was $S =\vs{13,14,15,16,17,18,20,22,23}$, with corresponding gapset $$G = \N \setminus S= [1,12] \cup \{19, 21, 24, 25\}$$ of cardinality $16$. Then $2G=[2,50] \setminus \{39, 41, 47\}$, so that $|2G|=46$ and $\beta_G(2)=46-3 \cdot 15=1$, implying $2 \in \buch(G)$ and thus impeding $S$ to be a Weierstrass semigroup. For more information on Buchweitz's condition and Weierstrass semigroups, see e.g.~\cite{EH, KY}.

Here are the contents of this paper. In Section~\ref{sec sumset}, we recall a result of Nathanson in additive combinatorics and we use it to study the asymptotic behavior of the function $\beta_A(n)$. In Section~\ref{sec numsemi}, we introduce the Buchweitz set of a numerical semigroup and we prove our main results. Section~\ref{sec conclusion} concludes the paper with open questions on the possible shapes of the sets $\buch(A)$.

\section{Sumset growth}\label{sec sumset}

Given finite subsets $A,B$ of a commutative monoid $(M,+)$, we denote as usual
$$
A+B=\{a+b \mid a \in A, b \in B\},
$$
the \emph{sumset} of $A,B$, and $2A=A+A$. More generally, if $n \ge 2$, we denote $nA=A+(n-1)A$, where $1A=A$. The set $nA$ is called the $n$-fold sumset of $A$.

\smallskip
A classical question in additive combinatorics is, how does $|nA|$ grow with $n$? Here we only consider the case $M=\Z$. We shall need the following result of Nathanson~\cite[Theorem 1.1]{N}.

\begin{theorem}\label{nathanson} Let $A_0 \subset \N$ be a finite subset of cardinality $k \ge 2$, containing $0$ and such that $\gcd(A_0)=1$. Let $a_0=\max (A_0)$. Then there exist integers $c,d$ and subsets $C \subseteq [0,c-2],\, D \subseteq [0,d-2]$ such that
$$
nA_0 = C \sqcup [c,a_0n-d] \sqcup (a_0n-D)
$$
for all $n \ge \max\{(|A_0|-2)(a_0-1)a_0,1\}$.
\end{theorem}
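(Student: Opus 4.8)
The plan is to translate the sumset $nA_0$ into factorization data in the numerical semigroup $S=\vs{A_0}$ and then analyse the two ends and the middle separately. Since $0 \in A_0$, the sumsets are nested, $nA_0 \subseteq (n+1)A_0$, and padding factorizations with zeros shows that for $t \in \N$ one has $t \in nA_0$ if and only if $t \in S$ and $t$ admits a factorization into at most $n$ nonzero elements of $A_0$. Writing $A_0=\{0=b_0<b_1<\dots<b_{k-1}=a_0\}$ and letting $\ell(t)$ be the least number of nonzero summands needed to express $t\in S$, this yields the clean description $nA_0=\{t\in S \mid \ell(t)\le n\}$, together with the a priori facts $0,a_0n\in nA_0$ and $nA_0\subseteq[0,a_0n]$. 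Everything then reduces to controlling $\ell(t)$ on $[0,a_0n]$.

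For the head, note that $\gcd(A_0)=1$ makes $S$ cofinite with some conductor, which will be the integer $c$; since $c-1$ is the Frobenius number it lies outside $S$, so $S\cap[0,c-1]\subseteq[0,c-2]$. Each of the finitely many elements of $S\cap[0,c-1]$ has a finite length, so once $n$ exceeds all of these lengths we get $nA_0\cap[0,c-1]=S\cap[0,c-1]=:C\subseteq[0,c-2]$, independent of $n$. The tail is handled by reflection: the set $A_0'=a_0-A_0$ again contains $0$, has maximum $a_0$, and satisfies $\gcd(A_0')=1$, while $nA_0'=a_0n-nA_0$. Applying the head analysis to $A_0'$ produces the conductor $d$ of $\vs{A_0'}$ and a stable set $D\subseteq[0,d-2]$, and reflecting back shows that the top part of $nA_0$ equals $a_0n-D$ for all large $n$.

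The \emph{crux} is filling the middle, i.e. proving $[c,a_0n-d]\subseteq nA_0$, equivalently $\ell(t)\le n$ for every such $t$. Here I would argue by residues modulo $a_0$: for each residue $r$ let $w_r$ be the least element of $S$ congruent to $r$ (its Apéry element), fix a shortest factorization $w_r=\sum c_i b_i$ of length $\nu_r=\sum c_i$, and write $t=w_r+\mu a_0$. Since $w_r$ is minimal in its class, $\mu\ge 0$ for every $t\in S$ with $t\equiv r$, whence $\ell(t)\le \nu_r+\mu$. The inequality $\nu_r+\mu\le n$ rearranges to $t\le a_0n-(a_0\nu_r-w_r)$, and the nonnegative integer $a_0\nu_r-w_r=\sum c_i(a_0-b_i)$ lies in $\vs{A_0'}$; its maximum over $r$ is what produces the same threshold $d$ as the reflected head, so the single cutoff $t\le a_0n-d$ makes the representation admissible in every class at once. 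This is what glues the three pieces into the stated partition $nA_0 = C \sqcup [c,a_0n-d] \sqcup (a_0n-D)$.

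The main obstacle, and the source of the explicit bound, is controlling the lengths $\nu_r$ uniformly in $r$. I would estimate them by a staircase argument: incorporating the middle generators $b_1,\dots,b_{k-2}$ one at a time, each newly reachable residue costs at most $a_0-1$ further summands of the current generator, so $\nu_r\le(k-2)(a_0-1)$ for every $r$, and correspondingly $w_r\le(k-2)(a_0-1)^2$. This bounds both $c$ and $d$ at the order $(k-2)(a_0-1)a_0$ and forces $\mu\ge 0$ together with the nonemptiness of $[c,a_0n-d]$ exactly once $n\ge(|A_0|-2)(a_0-1)a_0$, which is the claimed threshold. Verifying that this staircase estimate is sharp enough to yield precisely that constant, rather than a larger multiple, is the delicate point of the argument.
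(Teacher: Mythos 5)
First, a point of comparison: the paper gives no proof of this statement at all --- it is quoted verbatim as Nathanson's theorem \cite[Theorem 1.1]{N} and used as a black box --- so your attempt has to stand on its own. Your preparatory steps do stand: since $0\in A_0$, one indeed has $nA_0=\{t\in S \mid \ell(t)\le n\}$ for $S=\vs{A_0}$; the head $C=S\cap[0,c-1]$ stabilizes once $n$ exceeds the lengths of its finitely many elements; and the reflection $A_0'=a_0-A_0$, $nA_0'=a_0n-nA_0$, handles the top. The genuine gap is exactly at the step you call the crux. The inequality $\ell(w_r+\mu a_0)\le \nu_r+\mu$ is true but too weak, and your assertion that $\max_r(a_0\nu_r-w_r)$ ``produces the same threshold $d$ as the reflected head'' is false. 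Take $A_0=\{0,4,7,9\}$, so $S=\vs{4,7,9}$, $a_0=9$, and $S'=\vs{2,5,9}=\vs{2,5}$, whose conductor is $d=4$; moreover $d=4$ is forced here, since $D\subseteq[0,d-2]$ requires $d-1$ to be a gap of $S'$ (the gaps are $1$ and $3$), and $d=2$ fails because $9n-3\notin nA_0$. In class $r=1$ one has $w_1=19$ and $\nu_1=4$ (as $19=4+4+4+7$), so $a_0\nu_1-w_1=17\ne 4=d$, and your argument certifies class-$1$ elements only up to $9n-17$, whereas the middle interval $[c,9n-4]$ contains the class-$1$ element $9n-8$. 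That element does lie in $nA_0$, via $9n-8=(7+7+7+7)+9(n-4)$, a sum of $n$ nonzero elements of $A_0$; but your route cannot see it, because $\ell(w_1+9)=\ell(28)=4=\ell(w_1)$: lengths along a residue class do not grow by $1$ per step of $a_0$ when the base point is the Ap\'ery element $w_r$, which minimizes the value $t$ in its class but not the quantity that matters, namely $a_0\ell(t)-t$.

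The repair is close to what you wrote but must be stated correctly. The right base point in class $r$ is an element minimizing $a_0\ell(t)-t$ over $t\in S$, $t\equiv r \pmod{a_0}$; by your own reflection correspondence this minimum equals the Ap\'ery element of $S'$ in class $-r \bmod a_0$, hence is at most $d+a_0-1$, and together with the congruence $a_0\ell(t)-t\equiv -r$ this is precisely what covering $[c,a_0n-d]$ class by class requires. Alternatively, and more simply: your head-stabilization argument applied to $A_0'$ works on any fixed window, not just $[0,d-1]$. For every fixed $N$ and all sufficiently large $n$ one has $nA_0\cap[a_0n-N,a_0n]=a_0n-\bigl(S'\cap[0,N]\bigr)\supseteq a_0n-[d,N]$; taking $N=M:=\max_r(a_0\nu_r-w_r)$, a constant, and gluing with your estimate $[c,a_0n-M]\subseteq nA_0$ recovers the middle interval. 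Note finally that even after this repair you only obtain the conclusion for ``$n$ sufficiently large'': the explicit threshold $(|A_0|-2)(a_0-1)a_0$ is part of the statement, and your last paragraph merely asserts, via the staircase bound $\nu_r\le(k-2)(a_0-1)$, that the accounting works out to exactly this constant; no such accounting is carried out, so that remains a gap as well.
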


As pointed out in~\cite{N}, the hypotheses $0 \in A_0$ and $\gcd(A_0)=1$ are not really restrictive. Indeed, for any finite set $A \subset \Z$ with $|A| \ge 2$, the simple transformation $A \mapsto A_0=(A-\alpha)/d$, where $\alpha=\min(A)$ and $d=\gcd(A-\alpha)$, yields a set $A_0$ satisfying these hypotheses and such that $|nA_0|=|nA|$ for all $n$. In view of our applications to gapsets, we shall need the following version.
\begin{corollary}\label{growth} Let $A \subset \N_+$ be a finite subset containing $\{1,2\}$. Let $a=\max(A)$. Then there is an integer $b \le 1$ such that
$$
|nA| = (a-1)n+b
$$
for all $n \ge (|A|-2)(a-2)(a-1)$.
\end{corollary}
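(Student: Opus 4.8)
The plan is to obtain the corollary as a direct consequence of Theorem~\ref{nathanson} after normalizing $A$. Since $A \subset \N_+$ contains $\{1,2\}$, we have $\min(A)=1$, so the translate $A_0 = A-1 = \{x-1 \mid x \in A\}$ is a subset of $\N$ that contains $0$ (as $1 \in A$) and $1$ (as $2 \in A$); hence $\gcd(A_0)=1$ and $|A_0|=|A| \ge 2$. This is exactly the normalization $A_0=(A-\alpha)/d$ of the remark following Theorem~\ref{nathanson}, with $\alpha=1$ and $d=1$. The first observation I would record is that translating by a constant leaves sumset cardinalities unchanged: from $nA = nA_0 + n$ we get $|nA|=|nA_0|$ for every $n \ge 1$, so it suffices to analyze $|nA_0|$.

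Next I would apply Theorem~\ref{nathanson} to $A_0$, with $a_0=\max(A_0)=a-1$. It yields integers $c,d$ and subsets $C \subseteq [0,c-2]$, $D \subseteq [0,d-2]$ such that
$$
nA_0 = C \sqcup [c,(a-1)n-d] \sqcup ((a-1)n-D)
$$
for all $n \ge \max\{(|A|-2)(a-2)(a-1),1\}$. The threshold is precisely the one in the corollary, since $(|A_0|-2)(a_0-1)a_0 = (|A|-2)(a-2)(a-1)$ (and sumsets are only considered for $n \ge 1$).

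The main computation is to extract the cardinality from this disjoint decomposition. For $n$ past the threshold the three pieces are pairwise disjoint, the middle interval is nonempty, and $|(a-1)n - D| = |D|$, so
$$
|nA_0| = |C| + \bigl((a-1)n - d - c + 1\bigr) + |D|.
$$
Setting $b = |C| + |D| - c - d + 1$, an integer independent of $n$, this reads $|nA| = |nA_0| = (a-1)n + b$, which is the asserted formula.

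Finally I would check the bound $b \le 1$. The clean way is to use the trivial inclusion $A_0 \subseteq [0, a-1]$, which forces $nA_0 \subseteq [0, (a-1)n]$; the latter interval has $(a-1)n+1$ integers, so $|nA| = |nA_0| \le (a-1)n + 1$, and comparing with $(a-1)n+b$ gives $b \le 1$. I do not expect a real obstacle: the argument is bookkeeping on Nathanson's decomposition, the only delicate points being the translation identity $|nA|=|nA_0|$ and the exact matching of the threshold. It is worth verifying the degenerate case $a=2$ (equivalently $A=\{1,2\}$, $a_0=1$), where $nA_0 = [0,n]$ and $|nA| = n+1$; here $b=1$, showing that the bound $b \le 1$ is attained and cannot be sharpened in general.
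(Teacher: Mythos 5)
Your proposal is correct and takes essentially the same route as the paper: translate to $A_0=A-1$, apply Theorem~\ref{nathanson}, and read off $|nA_0|=(a-1)n+b$ with $b=|C|+|D|-c-d+1$ from the disjoint decomposition. The only (harmless) difference is in justifying $b\le 1$: the paper bounds $|C|\le\max(0,c-1)$ and $|D|\le\max(0,d-1)$ directly from the containments $C\subseteq[0,c-2]$, $D\subseteq[0,d-2]$, whereas you use the containment $nA_0\subseteq[0,(a-1)n]$ --- both arguments are valid.
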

\begin{proof} Set $A_0=A-1$ and $a_0=a-1$. Then $A_0$ contains $\{0,1\}$, hence it satisfies the hypotheses of Theorem~\ref{nathanson}. Using the same notation, its conclusion implies
\begin{equation}\label{nA0}
|nA_0| = a_0n+b
\end{equation}
for all  $n \ge \max\{(|A_0|-2)(a_0-1)a_0,1\}$, where
$$b=|C|+|D|-c-d+1.$$ Note that $b \le 1$ since $|C| \le \max(0,c-1)$, $|D| \le \max(0,d-1)$. The desired statement follows from \eqref{nA0} since $|nA|=|nA_0|$ for all $n \ge 0$.
\end{proof}

\subsection{Asymptotic behavior of $\beta_A(n)$}
We now study the evolution of $\beta_A(n)$ as $n$ grows.
\begin{theorem}\label{long term} Let $A \subset \N_+$ be a finite set containing $\{1,2\}$. Let $f=\max(A)$ and $g=|A|$. Then
$$
\lim_{n \to \infty} \beta_A(n) = \left\{
\begin{array}{rcl}
-\infty & \textrm{if} & f \le 2g-2,\\
+\infty & \textrm{if} & f \ge 2g.
\end{array}
\right.
$$
Finally if $f=2g-1$, then $\beta_A(n)$ is constant and nonpositive for $n$ large enough.
\end{theorem}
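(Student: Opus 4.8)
The plan is to combine Corollary~\ref{growth} with the definition of $\beta_A$ so that all three regimes fall out of a single affine expression. Since $1\in A$ we have $\min A=1$, hence $nA\subseteq[n,fn]$, and Corollary~\ref{growth} applies with $a=f$: there is an integer $b\le 1$ with $|nA|=(f-1)n+b$ for all large $n$. Substituting into the definition gives
$$
\beta_A(n)=(f-1)n+b-(2n-1)(g-1)=(f-2g+1)\,n+(b+g-1)
$$
for all large $n$, an affine function of $n$ whose slope is $f-(2g-1)$.

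First I would read off the two unbounded cases directly from this. If $f\le 2g-2$ the slope is $\le -1$, so $\beta_A(n)\to-\infty$; if $f\ge 2g$ the slope is $\ge 1$, so $\beta_A(n)\to+\infty$; in both cases the bounded term $b+g-1$ is irrelevant. When $f=2g-1$ the slope vanishes, and the same formula shows $\beta_A(n)=b+g-1$ is constant for all large $n$, which already settles the constancy assertion.

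The real content is the nonpositivity of this constant, and here I would switch to counting omitted values. With $f=2g-1$ the interval $[n,fn]$ containing $nA$ has $2(g-1)n+1$ elements, so writing $\omega_n$ for the number of its elements missed by $nA$, a one-line rearrangement yields $\beta_A(n)=g-\omega_n$; thus the claim is equivalent to $\omega_n\ge g$ for large $n$. Because $\{1,2\}\subseteq A$, the block $[n,2n]$ is entirely covered, and by Theorem~\ref{nathanson} (as used in Corollary~\ref{growth}) the only omissions for large $n$ sit in a window of fixed width at the top of $[n,fn]$. I would transport these to the bottom by the reflection $B=f-A$, which satisfies $\min B=0$, $\max B=2(g-1)$, $\gcd(B\setminus\{0\})=1$, and $nB=fn-nA$: under it the top-omissions of $nA$ become the bottom-omissions of $nB$, which for large $n$ are precisely the gaps of the numerical semigroup $S_B=\vs{B\setminus\{0\}}$. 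Hence $\omega_n=\operatorname{genus}(S_B)$ eventually.

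The hard part is then the purely arithmetic inequality $\operatorname{genus}(S_B)\ge g$, and I expect this to be the main obstacle: it does not follow from $1,2\in A$ alone, but must exploit the precise placement of the remaining elements of $A$. In the case that drives the paper, where $A=\N\setminus S$ is the gapset of a numerical semigroup, the equality $f=2g-1$ forces $S$ to be symmetric; symmetry gives $B=S\cap[0,f]$, whence $S_B=\vs{B\setminus\{0\}}\subseteq S$ and therefore $\operatorname{genus}(S_B)\ge\operatorname{genus}(S)=g$, which is exactly what is needed (and is consistent with the vanishing $\beta_A(n)=0$ recorded for symmetric semigroups of multiplicity $\ge 3$). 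Establishing this inclusion, together with the identification of $B$ via symmetry, is the delicate step; the surrounding bookkeeping—stabilization of $\omega_n$ and confinement of the omissions to a single window—is routine once the structure theorem is in hand.
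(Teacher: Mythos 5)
Your derivation is the same opening move as the paper's, but your constant term is the correct one and the paper's is not. The paper's proof expands $\beta_A(n)=(f-1)n+b-(2n+1)(g-1)$ --- note the factor $(2n+1)$, where the definition of $\beta_A$ has $(2n-1)$ --- and so arrives at the constant $b+1-g$, which is $\le 0$ since $b\le 1$ and $g\ge 2$. With the correct expansion, as you found, the constant is $b+g-1$, and $b\le 1$ only gives $\beta_A(n)\le g$. So for $f\le 2g-2$ and $f\ge 2g$ your argument agrees with the paper's and both are fine, and the eventual constancy for $f=2g-1$ is also fine; but the paper's proof of nonpositivity in that last case rests on a sign error, and your refusal to deduce it from $b\le 1$ alone is exactly right.

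In fact your suspicion that nonpositivity ``does not follow from $1,2\in A$ alone'' can be upgraded to a counterexample: the $f=2g-1$ clause of the theorem is false for general $A$. Take $A=\{1,2,5,7\}$, so $g=4$ and $f=7=2g-1$. Here $B=f-A=\{0,2,5,6\}$ generates $S_B=\vs{2,5}$, of genus $2$; accordingly $nA=[n,7n]\setminus\{7n-3,7n-1\}$ for all $n\ge 3$, hence $|nA|=6n-1$ and $\beta_A(n)=6n-1-3(2n-1)=2>0$ for all $n\ge 3$ (and $\beta_A(2)=1$). In your notation $\omega_n=2<g$. Thus $\buch(A)$ is infinite even though $f=2g-1$, which also refutes the ``if'' direction of Corollary~\ref{cor finite} for arbitrary $A$. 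Your repair in the gapset case is correct and is precisely what the paper needs: if $A=\N\setminus S$ and $f=2g-1$, then $S$ is symmetric, $B=f-A=S\cap[0,f]$, hence $S_B=\vs{B\setminus\{0\}}\subseteq S$, so the genus of $S_B$ is at least $g$ and $\beta_A(n)=g-\omega_n\le 0$ eventually. That argument (or the results of Komeda and Oliveira for symmetric semigroups, which the paper cites but deliberately avoids) is what must replace the flawed step so that Theorem~\ref{finite for NS} remains proved in the symmetric case of multiplicity $\ge 3$. In short, your proposal is not merely a different route: it exposes a genuine error in the paper's statement and proof, and supplies a correct statement and argument for the case the paper actually uses.
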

\begin{proof} By Corollary~\ref{growth}, we have $|nA|=(f-1)n+b$ for some integer $b \le 1$ and for $n$ large enough. Hence
\begin{eqnarray*}
\beta_A(n) & = & (f-1)n+b-(2n+1)(g-1) \\
& = & (f-2g+1)n + b+1-g
\end{eqnarray*}
for $n$ large enough. The claims for $f \le 2g-2$ and $f \ge 2g$ follow. If $f=2g-1$, then $\beta_A(n)=b+1-g \le 0$ for $n$ large enough, since $b \le 1$ and $g \ge 2$.
\end{proof}

\begin{corollary}\label{cor finite} Let $A \subseteq \N_+$ be a finite set containing $\{1,2\}$. Let $f=\max(A)$ and $g=|A|$. Then $\buch(A)$ is finite if and only if $f \le 2g-1$.
\end{corollary}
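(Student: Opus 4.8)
The plan is to deduce the statement directly from Theorem~\ref{long term} by a case analysis according to the position of $f$ relative to $2g-1$. The first observation is that, since $\buch(A)$ is by definition the set of integers $n \ge 2$ with $\beta_A(n) \ge 1$, it is finite if and only if $\beta_A(n) \le 0$ for all sufficiently large $n$. Thus the entire question reduces to the eventual sign of $\beta_A(n)$, which is exactly what Theorem~\ref{long term} controls, so no new sumset estimate is needed.

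For the ``if'' direction I would suppose $f \le 2g-1$ and split into two subcases. If $f \le 2g-2$, then $\lim_{n\to\infty}\beta_A(n)=-\infty$ by Theorem~\ref{long term}, so $\beta_A(n) \le 0$ for all large $n$ and $\buch(A)$ is finite. If instead $f = 2g-1$, then Theorem~\ref{long term} gives that $\beta_A(n)$ is eventually constant and nonpositive, so again $\beta_A(n) \le 0$ for all large $n$ and $\buch(A)$ is finite. Both subcases therefore yield finiteness.

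For the converse I would argue by contraposition: suppose $f \ge 2g$. Then $\lim_{n\to\infty}\beta_A(n)=+\infty$ by Theorem~\ref{long term}, so $\beta_A(n) \ge 1$ for all sufficiently large $n$; hence $\buch(A)$ contains all large integers $n \ge 2$ and is in particular infinite. Consequently, finiteness of $\buch(A)$ forces $f \le 2g-1$, completing the equivalence.

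There is no serious obstacle here, since Theorem~\ref{long term} already performs all the analytic work; the corollary is essentially a bookkeeping translation of its trichotomy into the dichotomy ``$f \le 2g-1$ versus $f \ge 2g$''. The only point demanding a little care is to verify that the boundary case $f = 2g-1$ lands on the \emph{finite} side. It does precisely because the eventual constant value is \emph{nonpositive}, so the defining inequality $\beta_A(n) \ge 1$ of $\buch(A)$ is never satisfied for large $n$; had that eventual value been positive, the case would instead have produced an infinite $\buch(A)$.
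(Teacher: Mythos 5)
Your proposal is correct and follows essentially the same route as the paper: both deduce the corollary directly from Theorem~\ref{long term}, treating $f \ge 2g$ via the limit $+\infty$ (giving infinitely many $n$ with $\beta_A(n)\ge 1$) and $f \le 2g-1$ via eventual nonpositivity of $\beta_A(n)$. The only cosmetic difference is that you spell out the two subcases $f \le 2g-2$ and $f = 2g-1$ separately, whereas the paper bundles them into a single appeal to the theorem.
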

\begin{proof} If $f \ge 2g$, then $\lim_{n \to \infty} \beta_{A}(n)=\infty$ by the theorem, whence $\beta_A(n) \ge 1$ for all large enough $n$. Thus $\buch(A)$ is infinite in this case. If $f \le 2g-1$, the theorem implies $\beta_A(n) \le 0$ for $n$ large enough, whence $\buch(A)$ is finite in that case.
\end{proof}

\section{Application to numerical semigroups}\label{sec numsemi}

\begin{definition} Let $S \subseteq \N$ be a numerical semigroup. We define the \emph{Buchweitz set of $S$} as
$\buchset(S) = \buch(\N \setminus S)$. Explicitly, setting $G=\N\setminus S$, we have
\begin{eqnarray*}
\buchset(S) & = & \{n \ge 2 \mid |nG| > (2n-1)(|G|-1)\} \\
& = & \{n \ge 2 \mid \beta_G(n) \ge 1\}.
\end{eqnarray*}
\end{definition}

\smallskip
In this section, we first prove that $\buchset(S)$ is \emph{finite} for all numerical semigroups $S$ of genus $g \ge 2$. We then show, by explicit construction, that the cardinality of $\buchset(S)$ may be arbitrarily large.

\subsection{Finiteness of $\buchset(S)$}

We start with a well known inequality linking the Frobenius number and the genus of a numerical semigroup.
\begin{proposition}\label{prop f le 2g-1}
Let $S \subset \N$ be a numerical semigroup with Frobenius number $f$ and genus $g \ge 1$. Then $f \le 2g-1$.
\end{proposition}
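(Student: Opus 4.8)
The plan is to exploit the additive closure of $S$ together with the reflection $x \mapsto f-x$. The key observation is that $f = \max(\Z \setminus S)$ forces $f \notin S$, so if both $x \in S$ and $f-x \in S$ held for some $x$, then $f = x + (f-x)$ would lie in $S$, a contradiction. Hence for every integer $x$ with $0 \le x \le f$, \emph{at least one} of $x$ and $f-x$ must be a gap of $S$. Since $0 \in S$ and $[c,\infty[\,\subseteq S$ with $f = c-1$, every gap is a positive integer lying in the interval $[1,f]$, so counting gaps is the same as counting members of $\N \setminus S$ inside $[0,f]$.

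Next I would turn this pairing into a genus lower bound by analysing the orbits of the involution $\sigma \colon x \mapsto f-x$ on the $f+1$ integers of $[0,f]$. When $f$ is odd, $[0,f]$ splits into exactly $(f+1)/2$ two-element orbits $\{x,f-x\}$, each of which contains a gap by the observation above; this yields $g \ge (f+1)/2$, i.e. $f \le 2g-1$, immediately. When $f$ is even there are $f/2$ genuine pairs plus one fixed point $x = f/2$, and each of the $f/2$ pairs still contributes a gap, giving at first only $g \ge f/2$, i.e. the weaker bound $f \le 2g$.

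I expect the even case to be the one real obstacle, and the way I would close the gap is to notice that the fixed point is itself forced to be a gap: if $f/2 \in S$ then $f = f/2 + f/2 \in S$, again contradicting $f \notin S$. Adding this extra gap upgrades the bound to $g \ge f/2 + 1$, hence $f \le 2g-2$, which is even stronger than required. Either way the two cases combine into $g \ge \lceil (f+1)/2 \rceil$, and since $g \ge 1$ guarantees $f \ge 1$ so that $f$ is well defined, this gives $2g \ge f+1$ and therefore $f \le 2g-1$.
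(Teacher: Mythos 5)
Your proposal is correct and rests on essentially the same idea as the paper: since $S$ is closed under addition and $f \notin S$, the reflection $x \mapsto f-x$ sends every element of $S \cap [0,f]$ to a gap. The paper packages this as an injection $S \cap [0,f] \hookrightarrow \N \setminus S$ and concludes directly from $|S \cap [0,f]| = (f+1)-g \le g$, which avoids your parity case-split; your orbit count reaches the same bound (and, as a minor bonus, yields the sharper inequality $f \le 2g-2$ when $f$ is even).
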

\begin{proof} Let $x \in S \cap [0,f]$. Then $f-x \notin S$ since $S$ is stable under addition and $x+(f-x)=f \notin S$. Hence, the map $x \mapsto f-x$ induces an injection
$$S  \cap [0,f] \hookrightarrow \N \setminus S.$$
Since $|S \cap [0,f]| = (f+1)-g$, it follows that $f \le 2g-1$, as claimed.
\end{proof}

Recall that $S$ is said to be \emph{symmetric} if $|S \cap [0,f]|=|\N\setminus S|$, i.e. if $f=2g-1$. A classical result of Sylvester states that any numerical semigroup of the form $S = \vs{a,b}$ with $\gcd(a,b)=1$ is symmetric.

\begin{theorem}\label{finite for NS} Let $S \subseteq \N$ be a numerical semigroup of genus $g\ge 2$. Then $\buchset(S)$ is finite.
\end{theorem}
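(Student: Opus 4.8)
The plan is to deduce the result directly from Corollary~\ref{cor finite} together with the Frobenius bound of Proposition~\ref{prop f le 2g-1}, once the hypotheses of the former are arranged. Writing $G = \N \setminus S$, I would first set up the dictionary between the semigroup data and the set $G$: since $0 \in S$ we have $G \subseteq \N_+$; the cardinality $|G|$ equals the genus $g$; and $\max(G)$ equals the Frobenius number $f$, because every integer of $\Z \setminus S$ lying below $0$ is smaller than the positive gaps. I would also record that $1 \in G$ for every numerical semigroup of positive genus, since $1 \in S$ would force $S = \N$.

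Corollary~\ref{cor finite} requires its input set to contain $\{1,2\}$, and this is the one point of care: we always have $1 \in G$, but $2 \in G$ holds precisely when $2 \notin S$, that is, when the multiplicity satisfies $m \ge 3$. I would therefore split according to the multiplicity. If $m \ge 3$, then $\{1,2\} \subseteq G$, so Corollary~\ref{cor finite} applies with $f = \max(G)$ and $g = |G|$, telling us that $\buch(G)$ is finite if and only if $f \le 2g-1$. But this inequality is exactly the content of Proposition~\ref{prop f le 2g-1}, so $\buchset(S) = \buch(G)$ is finite in this case.

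The case $m = 2$ is the one falling outside the corollary, and it is the main (albeit minor) obstacle: here $2 \in S$, the set $G$ omits $2$, and the $\{1,2\}$ hypothesis fails. I expect to dispatch it by a direct computation, since such semigroups are completely explicit. Indeed $m = 2$ forces $S = \vs{2, 2g+1}$, whose gapset is the arithmetic progression $G = \{1, 3, 5, \dots, 2g-1\}$. Passing to the normalized set $(G-1)/2 = [0, g-1]$ and using the identity $|nG| = |n\cdot((G-1)/2)|$ recorded after Theorem~\ref{nathanson}, one finds $|nG| = n(g-1)+1$ for all $n$. Substituting into the definition of $\beta_G$ gives $\beta_G(n) = n(g-1)+1 - (2n-1)(g-1) = -(g-1)(n-1)+1$, which is $\le 0$ for every $n \ge 2$ because $g \ge 2$. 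Hence $\buchset(S) = \emptyset$, in particular finite, completing the argument.
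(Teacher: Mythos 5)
Your proof is correct and follows essentially the same route as the paper: the case $m \ge 3$ is handled by combining Corollary~\ref{cor finite} with Proposition~\ref{prop f le 2g-1}, and the case $m=2$ by the explicit computation $G=\{1,3,\dots,2g-1\}$, $(G-1)/2=[0,g-1]$, giving $\beta_G(n)=-(g-1)(n-1)+1\le 0$ for $n\ge 2$. The paper's own argument for $m=2$ is the same normalization computation (it merely notes in passing that one could instead invoke the known results on symmetric semigroups), so there is nothing to add.
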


\begin{proof} Let $G = \N \setminus S$. Then $\buchset(S) = \buch(G)$ by definition.  We have $g=|G| \ge 2$. Let $f=\max(G)$ be the Frobenius number of $S$. Let $m = \min(S \setminus \{0\})$ be the multiplicity of $S$. Then $m \ge 2$ since $g \ge 2$, and $[1,m-1] \subseteq G$.

Assume first $m \ge 3$. Then $\{1,2\} \subseteq G$. Hence Corollary~\ref{cor finite} applies, and since $f \le 2g-1$ by Proposition~\ref{prop f le 2g-1}, it yields that $\buch(G)$ is finite, as desired.

Assume now $m=2$. Then $S=\vs{2,b}$ with $b$ odd and $b \ge 5$ since $|G| \ge 2$. At this point, we might conclude the proof right away using what is known in the symmetric case \cite{K,O}. However, for the convenience of the reader, let us give a short self-contained argument. We have $G=\{1,3,\dots,b-2\}$, i.e. all odd numbers from $1$ to $b-2$. Hence $G-1=\{0,2,\dots,b-3\}$ and $\gcd(G-1)=2$. Set $A=(G-1)/2=[0,k]$, where $k=(b-3)/2$. For all $n \ge 1$, we have
$$|nG|=|nA|=|nk+1|=n(|G|-1)+1.$$
Therefore $\beta_G(n)=-(n-1)|G|+n$, whence $\beta_G(n) \le 0$ for all $n \ge 2$. It follows that $\buch(G) = \emptyset$ and we are done.
\end{proof}

\subsection{Unboundedness of $|\buchset(S)|$}

We show here, by explicit construction, that $|\buchset(S)|$ may be arbitrarily large.

\begin{proposition}\label{prop [2,k+2]}
For any integer $b\geq 3$, there exists a numerical semigroup $S$ such that $\buchset(S)=[2,b]$.
\end{proposition}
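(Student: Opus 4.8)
The plan is to exhibit, for each $b \ge 3$, an explicit numerical semigroup whose gapset $G = \N \setminus S$ has a dense bottom and a sparse, carefully spread top, and to compute $\buch(G)$ by hand. Throughout I would arrange $m \ge 3$, so that $\{1,2\} \subseteq G$ and hence $\min(G) = 1$, $\max(G) = f$; then $nG \subseteq [n, nf]$, and I may write $|nG| = (nf - n + 1) - M(n)$, where $M(n)$ counts the integers of the span $[n,nf]$ that are \emph{missing} from $nG$. Setting $\delta = 2g - 1 - f$, which is $\ge 0$ by Proposition~\ref{prop f le 2g-1}, a direct expansion turns the definition of $\beta_G$ into the clean identity
$$\beta_G(n) = (g - n\delta) - M(n).$$
Since $M(n) \ge 0$, this already re-proves finiteness (Corollary~\ref{cor finite}) and shows that $n \in \buch(G)$ forces $n \le (g-1)/\delta$; so to reach a \emph{long} interval $[2,b]$ I must keep $\delta$ small while keeping $M(n)$ small throughout $2 \le n \le b$.

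The construction I would use is $G = [1, m-1] \cup H$ with $H$ a sparse subset of $[m+1, 2m-1]$ (and, for longer intervals, a subset spread over a higher layer, raising the depth), chosen so that $\delta$ is a fixed small constant and the sign change of $\beta_G$ falls exactly between $b$ and $b+1$. To evaluate $M(n)$ I would split each $n$-fold sum according to the number $j$ of summands taken from the high part $H$: the remaining $n - j$ summands run over the full interval $[1, m-1]$ and contribute a full subinterval, so $nG$ becomes a union of $n+1$ intervals $I_0, \dots, I_n$ indexed by $j$. The dense bottom forces the low-index intervals to overlap, so $M(n)$ reduces to the total length of the gaps between the \emph{top} intervals, a quantity governed by the auxiliary numerical semigroup generated by the downward steps $f - h$ $(h \in H)$ together with the interval of small steps coming from the dense bottom. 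This makes $M(n)$ explicitly computable and, for a good choice of $H$, eventually constant.

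The main obstacle is the simultaneous control of $M(n)$ across the whole range. Two pressures collide. A Freiman-type threshold forces $\beta_G(2) \ge 1$, i.e. $|2G| \ge 3g - 2$, which already requires the top part $H$ to be rich enough: Buchweitz's original gapset is precisely the extremal four-element configuration, yielding $\buch(G) = \{2\}=[2,2]$. Yet keeping $\beta_G$ positive all the way to $n = b$ demands $\delta$ small, i.e. $f$ close to $2g-1$, which makes $G$ sparse and tends to \emph{enlarge} $M(n)$. The delicate point is therefore to design a scalable family, with the depth and the size of $H$ both growing with $b$, for which $M(n)$ stays below $g - n\delta - 1$ for every $n \le b$ and rises to meet $g - n\delta$ at $n = b+1$. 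Once the exact value of $M(n)$ is pinned down on this range, the required inequalities $\beta_G(n) \ge 1$ for $2 \le n \le b$ and $\beta_G(b+1) \le 0$ follow by substitution into the identity above, and I would conclude by checking directly that the chosen $G$ is a genuine gapset, i.e. that its complement $S$ is closed under addition.
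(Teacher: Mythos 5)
Your setup is correct as far as it goes: the identity $\beta_G(n) = (g - n\delta) - M(n)$ (with $\delta = 2g-1-f$ and $M(n)$ the number of integers missing from $nG$ inside its span $[n,nf]$) checks out, and your idea of controlling $M(n)$ via the ``downward steps'' $f-h$, $h \in H$, is essentially the paper's reflection trick $A = f - G$ in different clothing. But what you have written is a plan, not a proof. The proposition is an existence statement, and you never exhibit the object: $H$ is never specified, $m$ is never specified as a function of $b$, $M(n)$ is never computed for any concrete family, and neither of the two required verifications ($\beta_G(n) \ge 1$ for $2 \le n \le b$, and $\beta_G(n) \le 0$ for $n > b$) is carried out. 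The phrase ``for a good choice of $H$, $M(n)$ stays below $g - n\delta - 1$ for every $n \le b$ and rises to meet $g - n\delta$ at $n = b+1$'' assumes exactly what must be proved; finding such an $H$ and pinning down the sumsets is the entire content of the proposition. Your own discussion flags this as ``the delicate point'' and then stops there.

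For comparison, the paper resolves that delicate point with a completely explicit family: $k = b-2$, $m = 6k+15$, and $G = [1,m-1] \sqcup \{2m-7, 2m-5, 2m-2, 2m-1\}$, i.e.\ depth $2$ and $|H| = 4$ \emph{fixed}, with only the multiplicity growing in $b$ --- contrary to your suggestion that the depth and the size of $H$ should both grow. Reflecting gives $A = (2m-1)-G = [0,1] \sqcup \{4,6\} \sqcup [m,2(m-1)]$, and an induction shows $nA = \left([0,6n-4] \sqcup \{6n-2,6n\}\right) \cup [m,2n(m-1)]$ for $n \ge 3$; the point is that this union is disjoint if and only if $6n+1 \le m$, i.e.\ $n \le k+2$, and collapses to the single interval $[0,2n(m-1)]$ afterwards, from which $\beta_G(n)$ (equal to $1$ at $n=2$, to $2$ for $3 \le n \le k+2$, and to $-6(n-k-3)$ beyond) is read off. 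Two smaller inaccuracies in your write-up are worth noting, though they are peripheral to the main defect: your claim that the identity ``already re-proves finiteness'' fails in the symmetric case $\delta = 0$, where $M(n) \ge 0$ gives no upper bound on $n$; and $nG$ is not literally a union of $n+1$ intervals indexed by the number $j$ of summands from $H$ --- the piece with $j = n$ is $nH$, which need not be an interval, and for intermediate $j$ the translates need not merge. The central gap remains: with no construction and no computation, the statement is not proved.
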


\begin{proof}
Let $k=b-2$, and let $S$ be the numerical semigroup of multiplicity $m=6k+15$ and depth $q=2$ whose  corresponding gapset $G=\N \setminus S$ is given by
\begin{equation}\label{eq gapset G}
G = [1,m-1] \sqcup \{2m-7, 2m-5, 2m-2, 2m-1\}.
\end{equation}
We claim that $\buchset(S)=[2,k+2]$. Indeed, we will show a more precise statement, namely
$$
\beta_G(n)=\left\{
\begin{array}{ccl}
1 & \textrm{if} & n=2,\\
2 & \textrm{if} & 3\leq n \leq k+2,\\
-6(n-k-3) & \textrm{if} & n\geq  k+3.
\end{array}
\right.
$$
Let $A=(2m-1)-G$. Then $\beta_G(n)=\beta_A(n)$ since $|nG|=|nA|$ for all $n \ge 1$. We have
$$
A=[0,1] \sqcup \{4,6\} \sqcup [m,2(m-1)].
$$
Let us compute $2A$ and $3A$. We obtain
\begin{eqnarray*}
2A & = & [0,2] \sqcup [4,8] \sqcup \{10,12\} \sqcup [m,4(m-1)], \\
3A & = & [0,14] \cup \{16,18\} \cup [m,6(m-1)].
\end{eqnarray*}
In general, we have
\begin{equation}\label{nA}
nA = \left([0,6n-4] \sqcup \{6n-2,6n\}\right) \cup [m,2n(m-1)]
\end{equation}
for all $n \ge 3$, as easily verified by induction on $n$.

Let us determine $|nA|$ for all $n \ge 1$.  Note first that \emph{the union in \eqref{nA} is disjoint if and only if $6n+1 \le m$ }. Moreover, as $m=6k+15$, we have
\begin{equation*}
    6n+1\leq m \Longleftrightarrow n\leq k+2.
\end{equation*}
In contrast, if $n\geq k+3$, i.e. if $6n-3\geq m$, then the union in (\ref{nA}) collapses to a single interval and we get
\begin{equation*}
    nA=[0,2n(m-1)].
\end{equation*}
Summarizing, we have
$$
|nA| = \left\{
\begin{array}{lcl}
    m+3 & \textrm{if} &  n=1,\\
    3m+7 & \textrm{if} &   n=2,\\
    (2n-1)(m-1)+6n-1 & \textrm{if} &  3\leq n\leq k+2,\\
    2n(m-1)+1 & \textrm{if} & n\geq k+4.\end{array}
\right.
$$
The stated formula for $\beta_G(n)=\beta_A(n)=|nA|-(2n-1)(|A|-1)$ follows. Hence $\buchset(S)=[2,k+2]$, as claimed.
\end{proof}
This family of numerical semigroups was inspired by the \emph{PF-semigroups} introduced in \cite{PFsemigroup}.

\subsection{More intervals}

What are the possible shapes of $\buchset(S)$ when $S$ varies? We do not know in general. By Proposition~\ref{prop [2,k+2]}, any finite integer interval $I$ with $|I| \ge 2$ and $\min(I) = 2$ may be realized as $I=\buchset(S)$ for some numerical semigroup $S$. Here we present families of numerical semigroups $S$ realizing as $\buchset(S)$ all finite integer intervals $I$ with $|I| \ge 2$ and $\min(I) \in \{3,4,5,6\}$.

\begin{proposition} Let $k \ge 1$. Let $S$ be the numerical semigroup of multiplicity $m=6k+19$ and depth $q=2$ whose corresponding gapset $G=\N \setminus S$ is given by
\begin{equation}\label{eq gapset G 3}
G = [1,m-1] \sqcup \{2m-7, 2m-6, 2m-2, 2m-1\}.
\end{equation}
Then $\buchset(S)=[3,k+3]$.
\end{proposition}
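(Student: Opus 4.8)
The plan is to follow the template of the proof of Proposition~\ref{prop [2,k+2]} almost verbatim, since the two gapsets differ only in one element ($2m-6$ in place of $2m-5$) and in the value of $m$. First I would reflect the gapset, setting $A=(2m-1)-G$, so that $\beta_G(n)=\beta_A(n)$ because $|nG|=|nA|$ for all $n$. A direct computation gives
$$
A = [0,1]\sqcup\{5,6\}\sqcup[m,2(m-1)],
$$
so the ``low'' part of $A$ is $S_0=\{0,1,5,6\}$ and the ``high'' part is the interval $B=[m,2(m-1)]$.

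The one genuinely new feature, which actually simplifies matters, is that $S_0=\{0,1\}+\{0,5\}$ factors as a sumset. Hence $nS_0=n\{0,1\}+n\{0,5\}=[0,n]+\{0,5,\dots,5n\}=\bigcup_{j=0}^{n}[5j,5j+n]$, and these $n+1$ intervals become pairwise overlapping exactly when $n\ge 4$, so that $nS_0=[0,6n]$ for $n\ge4$, while $|nS_0|=(n+1)^2$ for $n\le3$. Next I would prove the sumset formula
$$
nA=\Bigl(\bigcup_{j=0}^{n}[5j,5j+n]\Bigr)\cup[m,2n(m-1)]
$$
for all $n\ge1$, by induction on $n$ exactly as in \eqref{nA}. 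The substance here is to check that the part of $nA$ lying in $[m,\infty[$ is the full interval $[m,2n(m-1)]$ with no gaps: writing a general element as a sum of $i\ge1$ summands from $B$ and $n-i$ summands from $S_0$, the $i$-th ``level'' covers $[im,\,2i(m-1)+6(n-i)]$, and consecutive levels overlap because $m$ is large. This gap-free verification is the main obstacle.

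Finally, the whole problem reduces to locating the threshold at which the low block $[0,6n]$ meets the high block $[m,2n(m-1)]$. Since $m=6k+19$, these two blocks are disjoint iff $6n\le m-2$, i.e.\ iff $n\le k+2$, and for $n\ge k+3$ they merge into the single interval $[0,2n(m-1)]$. Computing $|nA|$ in each regime and subtracting $(2n-1)(|A|-1)=(2n-1)(m+2)$ yields
$$
\beta_G(n)=\left\{\begin{array}{lcl}
(n-2)^2 & \textrm{if} & 2\le n\le3,\\
4 & \textrm{if} & 4\le n\le k+3,\\
4-6(n-k-3) & \textrm{if} & n\ge k+3.
\end{array}\right.
$$
Reading off where $\beta_G(n)\ge1$ gives $\buchset(S)=[3,k+3]$: the value $m=6k+19$ is calibrated precisely so that $\beta_G(2)=0$ excludes $n=2$, $\beta_G(3)=1$ includes $n=3$, and the crossover at $n=k+3$ supplies the right endpoint.
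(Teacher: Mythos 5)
Your proof is correct and takes essentially the same route as the paper: reflect to $A=(2m-1)-G=[0,1]\sqcup\{5,6\}\sqcup[m,2(m-1)]$, establish $nA=\bigl(\bigcup_{j=0}^{n}[5j,5j+n]\bigr)\cup[m,2n(m-1)]$, locate the merge threshold determined by $m=6k+19$, and read off the piecewise values of $\beta_G$, which coincide with the paper's ($0$, $1$, $4$, and $6k-6n+22$). Your factorization $\{0,1,5,6\}=\{0,1\}+\{0,5\}$ is a pleasant shortcut past the paper's hand computation of $2A,3A,4A$, and your one misstatement --- that the two blocks are ``disjoint iff $n\le k+2$'' (at $n=k+3$ they are still disjoint, merely adjacent) --- is harmless, since your two overlapping branches both give $\beta_G(k+3)=4$.
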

\begin{proof} Let again $A=(2m-1)-G=[0,1]\sqcup\{5,6\}\sqcup[m,2(m-1)]$.
We then have
\begin{eqnarray*}
2A & = & [0,2] \sqcup [5,7] \sqcup [10,12] \sqcup [m,4(m-1)], \\
3A & = & [0,3] \sqcup [5,8] \sqcup [10,13] \sqcup [15,18] \sqcup [m,6(m-1)], \\
4A & = & [0,24] \cup [m,8(m-1)].
\end{eqnarray*}
It follows that $nA=[0,6n]\cup [m,2n(m-1)]$ for all $n \ge 4$. In particular, if $6n\ge m$ then $nA=[0,2n(m-1)]$. Therefore,
$$
\beta_G(n)=\beta_A(n)=\left\{
\begin{array}{ccl}
0 & \textrm{if} & n=2,\\
1 & \textrm{if} & n=3,\\
4 & \textrm{if} & 4 \le n \le k+3,\\
6k-6n+22 & \textrm{if} & n \ge k+4.
\end{array}
\right.
$$
Hence $\buchset(S)=[3,k+3]$, as claimed.
\end{proof}

\begin{proposition}
For $k\geq 1$ and $i\in\{1,2,3\}$, let $S_i$ be the numerical semigroup with $G_i=\N\setminus S_i$ given by
$$
\begin{array}{rcl}\label{eq gapset G 3}
G_1 & = & [1,m_1-1] \sqcup \{2m_1-6, 2m_1-2, 2m_1-1\},\\
G_2 & = & [1,m_2-1] \sqcup \{2m_2-10,2m_2-4, 2m_2-3, 2m_2-2\},\\
G_3 & = & [1,m_3-1] \sqcup \{2m_3-10, 2m_3-9, 2m_3-2\},
\end{array}
$$
where $m_1=4k+22$, $m_2=7k+44$ and $m_3=5k+55$, respectively. Then
$$
\buchset(S_1)  = [4,k+4], \;\;
\buchset(S_2) = [5,k+5], \;\;
\buchset(S_3) = [6,k+6].
$$
\end{proposition}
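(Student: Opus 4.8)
The plan is to establish each of the three equalities separately, running in each case the explicit method already used in the two preceding propositions; the three semigroups differ only in the small cluster of gaps lying just below $2m_i$ and in the affine dependence of $m_i$ on $k$, so a single template applies in all three cases. Fix $i \in \{1,2,3\}$ and abbreviate $m = m_i$, $G = G_i$. As before, I would first pass to the reflected set $A_i = (2m-1) - G$, for which $\beta_G(n) = \beta_{A_i}(n)$, since reflection preserves the cardinalities $|nG| = |nA_i|$ for every $n \ge 1$. Reflecting the bulk interval $[1,m-1]$ to $[m, 2(m-1)]$ and the three- or four-element top cluster of $G$ to a bounded set $H_i$ near the origin exhibits $A_i = H_i \sqcup [m, 2(m-1)]$ as a small \emph{head} $H_i$ together with one long interval.

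Next, following the model computations, I would compute the low-order sumsets $2A_i, 3A_i, \dots$ explicitly until the pattern stabilises, then conjecture a closed form for $nA_i$ valid for all $n$ above a small constant and prove it by induction on $n$ via $nA_i = A_i + (n-1)A_i$. The expected shape is
$$
nA_i = (nH_i) \cup [m, 2n(m-1)],
$$
where $nH_i$ is an interval-with-holes whose maximal element $\rho_i n$ grows at the linear rate $\rho_i = \max(H_i)$, the missing values forming a bounded pattern packed near that top. The long interval appears because the blocks obtained by using $j \ge 1$ summands from $[m, 2(m-1)]$ overlap consecutively; verifying this overlap is the content of the inductive step.

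I would then evaluate $|nA_i|$ by comparing the reach $\rho_i n$ of the head with the starting point $m$ of the body interval. For small $n$ the two pieces are disjoint, so $|nA_i| = |nH_i| + |[m, 2n(m-1)]|$; for large $n$ the head is entirely absorbed into the body and $nA_i$ collapses to a single interval, so $|nA_i|$ is just its length. Substituting each expression into $\beta_{A_i}(n) = |nA_i| - (2n-1)(|A_i|-1)$ yields a piecewise formula for $\beta_{A_i}(n)$, and the support $\{n \ge 2 : \beta_{A_i}(n) \ge 1\}$ can be read off; the affine values of $m_i$ are calibrated precisely so that this support equals the stated interval.

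The main obstacle is pinning down the \emph{exact} endpoints of each interval, and this is where the three cases genuinely diverge. The lower endpoint is dictated by the small-$n$ behaviour in the disjoint regime and is invisible to the asymptotics, so it must be read from the explicit low-order sumsets. The upper endpoint is subtler: between the disjoint and fully-merged regimes there is a short transition range in which the holes of $nH_i$ are absorbed into the body interval a few at a time, so there $|nA_i|$, and hence $\beta_{A_i}(n)$, has to be tracked hole by hole. Carrying out this transition count correctly for each of the three distinct heads $H_1, H_2, H_3$ is the crux of the argument.
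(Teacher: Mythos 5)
Your proposal takes essentially the same approach as the paper: the paper's own proof of this proposition is in fact omitted, stating only that it is ``similar to the proofs of Propositions \ref{prop [2,k+2]} and \ref{eq gapset G 3},'' and your template --- reflecting $G_i$ to $A_i=(2m_i-1)-G_i$, establishing the stabilized form of $nA_i$ by induction via $nA_i = A_i + (n-1)A_i$, and extracting a piecewise formula for $\beta_{A_i}(n)$ across the disjoint, transition, and fully-merged regimes --- is precisely the method of those two proofs. Like the paper, you stop short of executing the case-by-case arithmetic for the three heads, but the plan is sound and matches the published argument.
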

\begin{proof} Similar to the proofs of Propositions \ref{prop [2,k+2]} and \ref{eq gapset G 3}. We omit it here.
\end{proof}

Having realized all finite integer intervals $I$ with $|I| \ge 2$ and $\min(I) \in [2,6]$ as $I=\buchset(S)$ for a suitable numerical semigroups $S$, is it possible to do the same for all finite integer intervals $I$ with $\min(I) \ge 7$? We do not know in general. But here is a particular case where $\min(I)$ can be arbitrarily large. It is based on a family of numerical semigroups found in~\cite{K}.

\begin{proposition} For any integer $k \ge 1$, there is a numerical semigroup $S$ such that $\buchset(S) = [7+2k,7+4k]$.
\end{proposition}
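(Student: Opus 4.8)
The plan is to adapt the template used for Proposition~\ref{prop [2,k+2]} and the two propositions following it. For each $k \ge 1$ I would exhibit the explicit gapset $G = \N \setminus S$ drawn from the family of~\cite{K}, pass to its reflection $A = f - G$ where $f = \max(G)$ is the Frobenius number, and use that reflection preserves sumset cardinalities, so that $\beta_G(n) = \beta_A(n)$ for all $n \ge 1$. Writing $G = [1, M-1] \sqcup \Gamma$, where $M$ is the multiplicity and $\Gamma$ collects the ``large'' gaps, the reflected set splits as a disjoint union $A = L \sqcup H$ of a \emph{low block} $L = f - \Gamma$ sitting near $0$ (note $0 \in L$ since $f \in \Gamma$) and a \emph{high block} $H = [f-M+1,\,f-1]$, a long interval with $\max(H) = \max(A)$.

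The computation then proceeds by the same two-block mechanism visible in~\eqref{nA}. For each $n$, the sums using only elements of $L$ form $nL \subseteq [0,\,n\max(L)]$, while the sums using at least one element of $H$ fill up a single interval whose bottom is the fixed value $\min(H)$. These two pieces are disjoint precisely while $n\max(L) < \min(H)$, the \emph{disjoint regime}; in that regime a short computation gives the identity
$$
\beta_A(n) = \beta_L(n) + (n-1)\bigl(2\min(H) - 2 - \max(A)\bigr),
$$
which reduces the whole problem to understanding the small set $L$. Once the two pieces merge, $nA$ collapses to a single interval and $\beta_G(n) = \beta_A(n) \to -\infty$ (this is the regime $f \le 2g-2$ of Theorem~\ref{long term}), which guarantees finiteness and supplies the right-hand cutoff of $\buchset(S)$. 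I would then read off $|nA|$, and hence $\beta_G(n)$, on the three ranges $2 \le n \le 6+2k$, $\ 7+2k \le n \le 7+4k$, and $n \ge 8+4k$, concluding that the positive support is exactly $[7+2k, 7+4k]$.

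The feature that distinguishes this proposition from the earlier ones, and the main obstacle, is that \emph{both} endpoints of the interval grow with $k$, and with different slopes. The left endpoint $7+2k$ must arise from the disjoint-regime value of $\beta_A(n)$ (governed by $\beta_L$ together with the linear correction above) crossing from nonpositive to positive, which forces $L$ itself to be taken growing with $k$, with $\max(L)$ large enough that this expression tends to $+\infty$ and its zero crossing is located at $n \approx 2k$; whereas the right endpoint $7+4k$ must arise from the block-merge threshold $n \approx \min(H)/\max(L)$, located at $n \approx 4k$. Calibrating a single gapset so that these two independent thresholds land exactly at $7+2k$ and $8+4k$ — matching slope $2$ against slope $4$ — is the delicate point, and it is precisely for this that the family of~\cite{K} is invoked. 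Once the correct $G$ (equivalently $L$, $M$ and $f$) is written down, the verification is the by-now-routine induction establishing the analogue of~\eqref{nA} together with the three-range evaluation of $|nA|$, so I would present the gapset explicitly and then record those elementary computations.
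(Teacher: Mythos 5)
Your high-level strategy --- reflect $G$ to $A=f-G$, split $A=L\sqcup H$ into a small low block and a long high interval, and analyze $\beta_A$ by regimes --- is exactly the paper's approach, and your disjoint-regime identity $\beta_A(n)=\beta_L(n)+(n-1)\bigl(2\min(H)-2-\max(A)\bigr)$ is algebraically correct (with the paper's data it specializes to $\beta_L(n)-(n-1)(k+1)$ and reproduces the paper's first-case formula). But there are two genuine gaps. First, you never exhibit the gapset. The proposition is an existence statement, and the calibration you yourself call ``the delicate point'' is its entire content; deferring it defers the proof. The paper takes $G=[1,43+27k+4k^2]\cup\{80+51k+8k^2,\;85+53k+8k^2,\;86+53k+8k^2\}$, i.e.\ $L=\{0,1,6+2k\}$, $\min(H)=43+26k+4k^2$ and $f=86+53k+8k^2$, and then verifies everything for this specific family.

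Second, and more damaging, your structural picture is wrong in a way that changes the answer. You posit a dichotomy: either the two blocks are disjoint, or ``once the two pieces merge, $nA$ collapses to a single interval.'' In fact merging is gradual. In the paper's construction the blocks first touch at $n=8+2k$, but $nA$ only becomes a single interval at $n=11+4k$; in between, the low part still has isolated islands below $\min(H)$, and $|nA|$ obeys a third, different (quadratic) formula. This intermediate regime is not a technicality: in the disjoint regime $\beta_G>0$ only at its very last point $n=7+2k$, so all of $[8+2k,\,7+4k]$ --- the bulk of the target interval --- lies in the partially merged range, and the right endpoint $7+4k$ is a root of that third formula, not of your collapse formula. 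Concretely, using your two formulas (disjoint for $n\le 7+2k$, single interval $\beta_G(n)=(4k^2+27k+46)-(k+5)n$ from $n=8+2k$ on) with the paper's gapset and $k=1$ gives $\buchset(S)=[9,12]$, whereas the true value is $[9,11]$: the single-interval formula overcounts $|nA|$ wherever gaps remain and stays positive one step too long (this happens for all $k\le 5$). So the computation on which the paper spends most of its proof --- the partially merged range $6+2k<n\le 11+4k$ --- is precisely the one your plan omits; any complete write-up must either carry it out or produce a different explicit $L,H$ for which your two-regime picture genuinely holds, and no such set is given.
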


\begin{proof} For $k \ge 1$, let $S$ be the numerical semigroup minimally generated by the set $T_1 \cup T_2 \cup T_3$, where
\begin{eqnarray*}
T_1 & = & [44 + 27 k + 4 k^2,79 + 51 k + 8 k^2], \\
T_2 & = & [81 + 51 k + 8 k^2, 84 + 53 k + 8 k^2], \\
T_3 & = & [87 + 53 k + 8 k^2,87 + 54 k + 8 k^2].
\end{eqnarray*}
The corresponding gapset $G=\N \setminus S$ is then given by
$$
G = [1,43 + 27 k + 4 k^2] \cup \{80 + 51 k + 8 k^2, 85 + 53 k + 8 k^2, 86 + 53 k + 8 k^2 \}.
$$
Let $A=(86 + 53 k + 8 k^2)-G$. Then $$A=[0,1]\sqcup \{6+2k\}\sqcup [43 + 26 k + 4 k^2,85 + 53 k + 8 k^2],$$
of cardinality $|A|=46 + 27 k + 4 k^2$. The $n$-fold sumsets of $A$ are then given by
\begin{multline}\label{A_para_komeda}
nA=[0,n] \cup \left(\bigcup_{i=1}^{n}\left[ i (6 + 2 k), i (6 + 2 k) + n - i\right]\right)\\
\cup \left[43 + 26 k + 4 k^2,(85 + 53 k + 8 k^2) n\right].
\end{multline}

\noindent
$\bullet$ Assume first $2\leq n<6+2k$. In this case, we have
\begin{equation*}
\begin{split}
& 0<n<6+2k<6+2k+n-1<\dots < (n-1)(6+2k) \\
& < (n-1)(6+2k)+n-1 <n(6+2k)<(7+2k)(6+2k)+1 \\
& = 43 + 26 k + 4 k^2< (85 + 53 k + 8 k^2) n.
\end{split}
\end{equation*}
Thus, all the sets appearing in (\ref{A_para_komeda}) are disjoint and the cardinality of $nA$ is equal to
\begin{multline*}(n+1)+\sum _{i=1}^n i+\left(\left(8 k^2+53 k+85\right) n-(4 k^2+26 k+43)+1\right)=\\-41 - 26 k - 4 k^2 + (173 n)/2 + 53 k n + 8 k^2 n + n^2/2.
\end{multline*}
Thus,
\begin{multline}\label{caso1}
\beta_G(n)=(-41 - 26 k - 4 k^2 + (173 n)/2 + 53 k n + 8 k^2 n + n^2/2)\\-\left(4 k^2+27 k+46-1\right) (2 n-1)=\\
(4+k)-(\frac 7 2 +k)n+\frac 1 2 n^2
\end{multline}
for every $n\in [2,5+2k]$.

The only difference between the case $n=6+2k$ and the previous one is that the sets $[0,n]$ and $[6+2k,6+2k+n-1]$ have a nonempty intersection, equal to $\{6+2k\}$.
Replacing $n$ by $6+2k$ and subtracting one, we obtain $\beta_G(6+2k)=0$.

\smallskip
\noindent
$\bullet$ Assume now $6+2k < n \leq 11+4k$.
The sequence of sets $[0,n]$ and
$$
[6+2k,6+2k+n-1], \dots, [(n-5-2k)(6+2k),(n-5-2k)(6+2k)+(5+2k)]
$$
verifies that the intersection of any two consecutive terms is nonempty. Moreover, their union is the interval $[0,(n-5-2k)(6+2k)+(5+2k)]$ whose cardinality is equal to $(n-5-2k)(6+2k)+(5+2k)+1$.
For $i=n-4-2k,\dots,6+2k$ the intervals are disjoint with all the others sets appearing in the expression (\ref{A_para_komeda}); the cardinality of the union of these sets is equal to $\displaystyle \sum _{i=n-2 k-5}^{2k+5} i$.
For every $i=7+2k,\dots,n$ the intersection
$$[ i (6 + 2 k), i (6 + 2 k) + n - i] \cap [43+26k+4k^2,(8 k^2+53 k+85)n]$$ is nonempty, except for $n=7+2k$.
Since $(7+2k)(6+2k)=42+26k+4k^2$,
the set  
$$\left(\bigcup_{i=7+2k}^{n}\left[ i (6 + 2 k), i (6 + 2 k) + n - i\right]\right)\\
\cup \left[43 + 26 k + 4 k^2,(85 + 53 k + 8 k^2) n\right]$$ is equal to $[42+26k+4k^2,(85 + 53 k + 8 k^2) n]$, and the cardinality of this set is $\left(\left(8 k^2+53 k+85\right) n-4 k^2-26 k-42\right)+1$.
Putting all the above together, we have that if $6+2k < n \leq 11+4k$, the set $nA$ has cardinality equal to
\begin{multline*}
    ((n-5-2k)(6+2k)+(5+2k)+1)+\\
    \sum _{i=n-2 k-5}^{2k+5} i+\left(\left(8 k^2+53 k+85\right) n-4 k^2-26 k-42\right)+1=\\
    -65 - 46 k - 8 k^2 + (193 n)/2 + 57 k n + 8 k^2 n - n^2/2,
\end{multline*}
and therefore
\begin{multline}\label{caso3}
\beta_G(n)=
-65 - 46 k - 8 k^2 + (193 n)/2 + 57 k n + 8 k^2 n - n^2/2\\-(2 n - 1) (46 + 27 k + 4 k^2 - 1)=\\
(-20-19 k-4 k^2)+(3 k +\frac{13 }{2})n-\frac{n^2}{2}
\end{multline}
for every $n\in [6+2k,11+4k]$.

\smallskip
\noindent
$\bullet$ Finally, assume $11+4k<n$. The set
$$
[0,n]\cup
(\bigcup_{i=1}^{6+2k}[i(6+2k),i(6+2k)+n-i])
\cup [43 + 26 k + 4 k^2,(85 + 53 k + 8 k^2) n]$$ is equal to $[0,(85 + 53 k + 8 k^2) n]$ and the remaining intervals are contained in this union.
So we have $nA=[0,(85 + 53 k + 8 k^2) n]$ and therefore
\begin{multline}\label{caso4}
\beta_G(n)=-\left(4 k^2+27 k+46-1\right) (2 n-1)+\left(8 k^2+53 k+85\right) n+1=\\(4k^2+27k+46)-(k+5)n
\end{multline}
for every $n>11+4k$.

Combining $\beta_G(6+2k)=0$ with the formulation of (\ref{caso1}), (\ref{caso3}) and (\ref{caso4}) for $\beta_G(n)$, we get the following formulas:
$$
\beta_G(n)=\left\{
\begin{array}{lcl}
(4+k)-(\frac 7 2 +k)n+\frac 1 2 n^2 & \textrm{if} & 2\leq n< 6+2k,\\
0 & \textrm{if} & n=6+2k,\\
(-20-19 k-4 k^2)+(3 k +\frac{13 }{2})n-\frac{n^2}{2} & \textrm{if} & 6+2 k<n\leq 11+4 k,\\
(4k^2+27k+46)-(k+5)n & \textrm{if} & 11+4 k<n.
\end{array}
\right.
$$

Let $k \ge 1$ be fixed. For $2\leq n<6+2k$, the formula of $\beta_G(n)$ is a degree two polynomial in $n$ with positive leading coefficient such that $\beta_G(2)=-1-k<0$ and $\beta_G(5+2k)=-1-k<0$. We have therefore $\beta_G(n)<0$ for every $n=2,\dots,5+2k$.

If $n>11+4k$, we now have that $\beta_G(n)$ is a degree one polynomial with negative leading coefficient and such that $\beta_G(12+4k)=-14-5k<0$. So $\beta_G(n)<0$ for every $n>11+4k$.

Finally, if $6+2 k<n \leq  11+4 k$ the function $\beta_G(n)$ is a degree two polynomial in $n$ with negative leading coefficient. As in addition $\beta_G(7+2k)=1+k$, $\beta_G(7+4k)=1$ and $\beta_G(8+4k)=-k$, the only positive values that we have in this part are for $n\in [7+2k,7+4k]$.

Since $\beta_G(n) \le 0$ except for $n\in [7+2k,7+4k]$, the set $\buchset(S)$ is equal to $[7+2k,7+4k]$.
\end{proof}

\section{Concluding remarks}\label{sec conclusion}
The current knowledge on the structure of $\buch(A)$ for finite subsets $A \subset \Z$ is very scarce, even for gapsets. Do they have some special shape or property? We end this paper with three questions based on the few currently available observations.

\begin{question} Let $A \subset \Z$ be a finite subset, or more specifically a gapset. Is the set $\buch(A)$ always an interval of integers?
\end{question}

\begin{question} Even more so, is the function $\beta_A(n)$ unimodal?
\end{question}

\begin{question} In sharp contrast with the above questions, let $T \subset 2+\N$ be any finite subset. Does there exist a finite subset $A \subset \N$, or more specifically a gapset, such that
$\buch(A)=T$?
\end{question}

{\bf Acknowledgement}. Part of this paper was written during a visit of the first-named author to the Universidad de C\'{a}diz (Spain) which was partially supported by {\em Ayudas para Estancias Cortas de Investigadores} (EST2019-039, Programa de Fomento e Impulso de la Investigaci\'{o}n y la Transferencia en la Universidad de C\'{a}diz).  The second, third and fourth-named authors were partially supported by Junta de Andaluc\'{\i}a research group FQM-366 and by the project MTM2017-84890-P (MINECO/FEDER, UE).

{\small

}

\medskip

\noindent
{\small
\textbf{Authors' addresses:}

\medskip
\noindent
Shalom Eliahou\textsuperscript{a,b}

\noindent
\textsuperscript{a}Univ. Littoral C\^ote d'Opale, UR 2597 - LMPA - Laboratoire de Math\'ematiques Pures et Appliqu\'ees Joseph Liouville, F-62100 Calais, France\\
\textsuperscript{b}CNRS, FR2037, France\\
\textbf{e-mail:} eliahou@univ-littoral.fr

\medskip
\noindent
Juan Ignacio Garc\'ia-Garc\'ia\textsuperscript{c,d}\\
\textbf{e-mail:} ignacio.garcia@uca.es

\medskip
\noindent
Daniel Mar\'in-Arag\'on\textsuperscript{c}\\
\textsuperscript{c}Departamento de Matem\'aticas, Universidad de C\'adiz, E-11510 Puerto Real (C\'{a}diz, Spain).\\
\textbf{e-mail:} daniel.marin@uca.es

\medskip
\noindent
Alberto Vigneron-Tenorio\textsuperscript{d,e}\\
\textsuperscript{e}Departamento de Matem\'aticas, Universidad de C\'adiz, E-11406 Jerez de la Frontera (C\'{a}diz, Spain).\\
\textsuperscript{d}INDESS (Instituto Universitario para el Desarrollo Social Sostenible), Universidad de C\'adiz, E-11406 Jerez de la Frontera (C\'{a}diz, Spain).\\
\textbf{e-mail:} alberto.vigneron@uca.es

}

\end{document}